\numberwithin{equation}{section}  %%% equations numbers (A.B)
\renewcommand{\pmod}[1]{\allowbreak\mkern7mu({\operator@font mod}\,\,#1)}
\newcommand{\ssum}[1]{\sum_{\substack{#1}}}  %%% stacked sum
\renewcommand{\le}{\leqslant}
\renewcommand{\ge}{\geqslant}
\renewcommand{\(}{\left(}
\renewcommand{\)}{\right)}
\newcommand{\pfrac}[2]{\left(\frac{#1}{#2}\right)}  %%% frac with paren
\newcommand{\eps}{\varepsilon}
\newcommand{\be}{\begin{equation}}
\newcommand{\ee}{\end{equation}}
\begin{document}

\def\A{\mathbb{A}}
\def \C{\mathbb{C}}
\def \F{\mathbb{F}}
\def \K{\mathbb{K}}

\def \Z{\mathbb{Z}}
\def \P{\mathbb{P}}
\def \R{\mathbb{R}}
\def \Q{\mathbb{Q}}
\def \N{\mathbb{N}}
\def \Z{\mathbb{Z}}

\renewcommand{\aa}{\mathbf{a}}

\def\B{\mathcal B}
\def\e{\varepsilon}
\def\a{\alpha}

\def\cA{{\mathcal A}}
\def\cB{{\mathcal B}}
\def\cC{{\mathcal C}}
\def\cD{{\mathcal D}}
\def\cE{{\mathcal E}}
\def\cF{{\mathcal F}}
\def\cG{{\mathcal G}}
\def\cH{{\mathcal H}}
\def\cI{{\mathcal I}}
\def\cJ{{\mathcal J}}
\def\cK{{\mathcal K}}
\def\cL{{\mathcal L}}
\def\cM{{\mathcal M}}
\def\cN{{\mathcal N}}
\def\cO{{\mathcal O}}
\def\cP{{\mathcal P}}
\def\cQ{{\mathcal Q}}
\def\cR{{\mathcal R}}
\def\cS{{\mathcal S}}
\def\cT{{\mathcal T}}
\def\cU{{\mathcal U}}
\def\cV{{\mathcal V}}
\def\cW{{\mathcal W}}
\def\cX{{\mathcal X}}
\def\cY{{\mathcal Y}}
\def\cZ{{\mathcal Z}}

\def\f{\frac{|\A||B|}{|G|}}
\def\AB{|\A\cap B|}
\def \Fq{\F_q}
\def \Fqn{\F_{q^n}}

\def\({\left(}
\def\){\right)}
\def\rf#1{\left\lceil#1\right\rceil}
\def\Res{{\mathrm{Res}}}

\newcommand{\ind}{\operatorname{ind}}

\newcommand{\comm}[1]{\marginpar{
\vskip-\baselineskip \raggedright\footnotesize
\itshape\hrule\smallskip#1\par\smallskip\hrule}}

\newtheorem{lem}{Lemma}
\newtheorem{lemma}[lem]{Lemma}
\newtheorem{prop}{Proposition}
\newtheorem{proposition}[prop]{Proposition }
\newtheorem{thm}{Theorem}
\newtheorem{theorem}[thm]{Theorem}
\newtheorem{cor}{Corollary}
\newtheorem{corollary}[cor]{Corollary}
\newtheorem{prob}{Problem}
\newtheorem{problem}[prob]{Problem}
\newtheorem{ques}{Question}
\newtheorem{question}[ques]{Question}
\newtheorem{rem}{Remark}

\title{On the smallest simultaneous power nonresidue modulo a prime}

\author{
{\sc K. Ford}, {\sc M. Z. Garaev} and {\sc S. V. Konyagin}}

\date{}

\maketitle

\begin{abstract}
Let $p$ be a prime and $p_1,\ldots, p_r$ be distinct prime divisors of
$p-1$. We prove that the smallest positive integer $n$ which is a
simultaneous $p_1,\ldots,p_r$-power nonresidue modulo $p$ satisfies
$$
n<p^{1/4 - c_r+o(1)}\quad(p\to\infty)
$$
for some positive $c_r$ satisfying  $c_r\ge e^{-(1+o(1))r} \; (r\to \infty).$
\end{abstract}

\section{Introduction}

Let $n(p)$ be the
smallest positive quadratic nonresidue modulo $p$ and $g(p)$ be the
smallest positive primitive root modulo $p$. The problem of upper
bound estimates for $n(p)$ and $g(p)$ starts from the early works of
Vinogradov. It is believed that $n(p)= p^{o(1)}$ and $g(p)= p^{o(1)}$
as $p\to\infty$. Vinogradov~\cite{Vin1, Vin4} proved that
$$
n(p)\ll p^{\frac{1}{2\sqrt{e}}}(\log p)^2,\qquad g(p)< \frac{2^{k+1}(p-1)p^{\frac{1}{2}}}{\phi(p-1)},
$$
where $k$ is the number of distinct prime divisors of $p-1$. Hua~\cite{Hua} improved Vinogradov's result to
$g(p)<2^{k+1}p^{1/2}$ and then Erd\H
os and Shapiro~\cite{ES} refined it to $g(p)\ll
k^Cp^{\frac{1}{2}}$, where $C$ is an absolute constant. These bounds
were improved by Burgess~\cite{Bur1, Bur2} to
$$
n(p)<p^{\frac{1}{4\sqrt{e}} +o(1)},\qquad g(p)< p^{\frac{1}{4}+o(1)} \qquad (p\to\infty).
$$
The Burgess bounds remains essentially the best known up to  date,
in a sense that it is not even known that $n(p)\ll p^{1/4\sqrt{e}}$
or that $g(p)\ll p^{1/4}$.

If one allows a small exceptional set of primes, then better estimates may be obtained.
Using his ``large sieve'', Linnik \cite{Li} proved that for any $\eps>0$, there are only
$O_\eps(\log\log x)$ primes $p\le x$ for which $n(p)>p^\eps$.  The sharpest to date results for
$g(p)$ (which also hold for the least \emph{prime} primitive root modulo $p$) are due to Martin
\cite{Ma}, who proved that for any $\eps>0$, there is a $C>0$ so that $g(p)=O( (\log p)^C )$ with
at most $O(x^\eps)$ exceptions $p\le x$.  All of these type of results are ``purely existential'', in that
one cannot say for which specific primes $p$ the bounds hold (say, in terms of the factorization of $p-1$).

From elementary considerations it follows that an integer $g$ is
a primitive root modulo $p$ if and only if for any prime divisor $q
| p-1$ the number $g$ is a $q$-th power nonresidue modulo $p$. Thus,
if $p_1,\ldots,p_k$ are all the distinct prime divisors of $p-1$,
then $g(p)$ is the smallest positive simultaneous $p_1,\ldots,p_k$-th
power nonresidue modulo $p$. In the present paper we prove the
following result.

\begin{theorem}
\label{thm:main} Let $p$ be a prime number and $p_1,\ldots, p_r$ be
distinct prime divisors of $p-1$.  Then the smallest positive integer
$n$ which is a simultaneous $p_1,\ldots,p_r$-th power nonresidue
modulo $p$ satisfies
$$
n<p^{1/4-c_r}e^{C(\log r)^{1/2}(\log p)^{1/2}}
$$
where $C>0$ is an absolute constant and  $c_r\ge e^{-(1+o(1))r}$ as $r\to \infty.$
\end{theorem}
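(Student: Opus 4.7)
The plan is to exhibit a simultaneous $p_1,\ldots,p_r$-th power nonresidue in $[1,N]$ for $N=p^{1/4-c_r}e^{C(\log r)^{1/2}(\log p)^{1/2}}$. Writing $H_i\le\F_p^*$ for the subgroup of $p_i$-th powers (of index $p_i$), the number of simultaneous nonresidues in $[1,N]$ is
\[
S(N)=\sum_{n\le N}\prod_{i=1}^r\bigl(1-\mathbf{1}_{H_i}(n)\bigr),
\]
and via the orthogonality expansion $\mathbf{1}_{H_i}(n)=p_i^{-1}\sum_{\psi^{p_i}=1}\psi(n)$ this equals a main term $N\prod_i(1-1/p_i)\gg N/\log(r+2)$ (by Mertens, since the $p_i$ are $r$ distinct primes) plus a signed combination of nontrivial multiplicative character sums $\sum_{n\le N}\psi(n)$, indexed by characters $\psi$ whose order divides $p_1\cdots p_r$ (equivalently, by products of nontrivial characters of the prime orders $p_i$).

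A direct application of Burgess's bound $\bigl|\sum_{n\le N}\psi(n)\bigr|\ll N^{1-1/k}p^{(k+1)/(4k^2)+o(1)}$ to each error term in this expansion yields only the Burgess threshold $N\gg p^{1/4+o(1)}$. To push below $p^{1/4}$ I would invoke the Vinogradov smoothing trick together with a pigeonhole: if the least simultaneous nonresidue $n_0$ exceeds $N$, then every prime $q\le N$ lies in some $H_i$, and one can fix an index $i_0$ such that a proportion $\tau\ge 1/r$ of the primes up to $N$ lies in $H_{i_0}$. Products of such primes are themselves in $H_{i_0}$, so for a parameter $t>1$ with $x=N^t$ taken above the Burgess threshold, a Dickman/Alladi-style smooth-number estimate restricted to primes from a set of density $\tau$ in the primes gives a lower bound $|H_{i_0}\cap[x,2x]|\gg x\,\rho_\tau(t)$; on the other hand, Burgess applied to the $p_{i_0}$-characters gives $|H_{i_0}\cap[x,2x]|\le x/p_{i_0}+O\bigl(x^{1-1/k}p^{(k+1)/(4k^2)+o(1)}\bigr)$. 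Combining the two forces
\[
\rho_\tau(t)-1/p_{i_0}\ll x^{-1/k}p^{(k+1)/(4k^2)+o(1)},
\]
which is the constraint driving the improvement past $p^{1/4}$.

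Threading this inequality requires jointly optimizing the three parameters $t$ (equivalently $u=\log x/\log y$), $k$, and the smoothness threshold $y$. The optimum $t$ should grow like $\log r$, chosen to balance the pigeonhole density loss $\tau^{t(1+o(1))}\ge r^{-t(1+o(1))}$ against the Dickman decay $\rho(t)\sim t^{-t}$, and the resulting saving $c_r$ comes out of order $e^{-(1+o(1))r}$; letting $k\to\infty$ suitably slowly absorbs the Burgess exponent into the sub-polynomial factor $e^{C(\log r)^{1/2}(\log p)^{1/2}}$, which also soaks up the secondary logarithmic losses in the smooth-number estimate. The main obstacle is precisely this joint three-parameter optimization together with the correct behavior of the generalized Dickman function $\rho_\tau$: the pigeonhole and Dickman losses pull in opposite directions, and the net saving must survive uniformly over all configurations of $p_1,\ldots,p_r$. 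In particular, the presence or absence of small primes like $p_i=2,3$ significantly affects both the main term $N\prod(1-1/p_i)$ and the pigeonhole constant $\tau$, and must be handled separately (those cases already yielding stronger bounds of the classical Burgess--Vinogradov type).
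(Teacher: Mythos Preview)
Your approach via the Vinogradov smoothing trick plus pigeonhole does not go through, and the paper proceeds along an entirely different route.  The difficulty is in your step 3: once you pigeonhole down to a set $P$ of primes below $N$ with relative density $\tau\ge 1/r$ inside $H_{i_0}$, the lower bound you obtain for $|H_{i_0}\cap[x,2x]|$ from $P$-smooth integers is far too small to beat the Burgess count $x/p_{i_0}$.  Your heuristic $\rho_\tau(t)\approx \tau^{t}\rho(t)$ is incorrect: for $t$ bounded a typical integer (and hence a typical $N$-smooth integer near $N^t$) has about $\log\log N$ prime factors, not $t$, so restricting every prime factor to a density-$\tau$ subset costs a factor of order $(\log N)^{\tau-1}$ rather than $\tau^{t}$.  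In particular for $\tau=1/r$ the $P$-smooth count in $[x,2x]$ is $o(x/\log N)$, which can never exceed $x/p_{i_0}$ when $p_{i_0}=2$, and your comparison inequality $\rho_\tau(t)-1/p_{i_0}\ll x^{-1/k}p^{(k+1)/(4k^2)+o(1)}$ gives no contradiction at all in that case.  The Vinogradov mechanism works for $r=1$ precisely because \emph{all} primes below $N$ lie in the subgroup; as soon as you have only a $1/r$-fraction, the resulting smooth-number density collapses and the method loses its force.

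The paper does something quite different.  First it shows (via Burgess plus a Brun sieve) that a \emph{positive proportion} $\gg 1/r$ of integers $n\le H$ with $H=p^{1/4}e^{O((\log r\,\log p)^{1/2})}$ are simultaneous nonresidues.  Independently, it proves a combinatorial reduction: if a simultaneous nonresidue $n$ has $t>r2^r$ divisors, then among any such collection one can find $d_i<d_j$ with $n'=nd_i/d_j$ again a simultaneous nonresidue (this is Corollary~\ref{cor:cor3}, proved by iterating a pigeonhole/parity lemma in each coordinate of $\F_{p_1}\times\cdots\times\F_{p_r}$).  Finally, a separate result (Lemma~\ref{lem:separ}) shows that all but $O(H/t^{\eta})$ integers $n\le H$ possess $t$ divisors with consecutive ratios exceeding $H^{1/t^{c}}$ for any fixed $c>1/\log 2$.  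Intersecting these two sets and applying the reduction yields an $n'\le n\cdot p^{-1/(4t^{c})}$ that is still a simultaneous nonresidue, and with $t\asymp r2^{r}$ this gives the saving $c_r=(r2^r)^{-c+o(1)}=e^{-(1+o(1))r}$.  The gain thus comes from divisor structure and a pigeonhole in $\prod_i\F_{p_i}$, not from smooth numbers.
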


The novelty of the result is given by the factor $p^{-c_r}$. We
observe that for
$c_r<(\log p)^{-1/2}$ (in particular, for
$r\ge(0.5+\e)\log\log p$
and $p\ge p(\e)$) this factor
is  dominated by the exponential factor.

The following corollaries directly follow from
Theorem~\ref{thm:main}.

\begin{corollary}
\label{cor:main1} Let $p$ be a prime number and $p_1,\ldots, p_r$ be
distinct prime divisors of $p-1$, where $r$ is fixed. Then the
smallest positive integer $n$ which is a simultaneous
$p_1,\ldots,p_r$-th power nonresidue modulo $p$ satisfies
$$
n<p^{1/4 - c_r+o(1)}\quad(p\to\infty).
$$
\end{corollary}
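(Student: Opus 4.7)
The plan is to estimate the counting function
$$
S(N) := \#\{m \le N : m \text{ is a simultaneous } p_1,\dots,p_r\text{-th power nonresidue mod } p\}
$$
from below via a character expansion, and to show $S(N)>0$ for
$N = p^{1/4-c_r}e^{C(\log r)^{1/2}(\log p)^{1/2}}$. The primary tools will be the Burgess character sum inequality together with a Vinogradov-type smoothing argument.

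Let $\chi_i$ denote a multiplicative character modulo $p$ of exact order $p_i$. The indicator of a simultaneous nonresidue admits the representation
$$
\mathbf{1}_{NR}(m) = \prod_{i=1}^r \left(1 - \frac{1}{p_i}\sum_{j=0}^{p_i-1}\chi_i^j(m)\right),
$$
so that $S(N) = AN + E(N)$, where $A = \prod_{i=1}^r(1-1/p_i)$ is the global density of simultaneous nonresidues and $E(N)$ is a signed linear combination of incomplete character sums $T(\psi) = \sum_{m\le N}\psi(m)$ over the nonprincipal characters $\psi = \prod_{i\in J}\chi_i^{j_i}$ indexed by nonempty $J\subseteq\{1,\dots,r\}$. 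Applying Burgess's inequality $|T(\psi)| \ll N^{1-1/k}p^{(k+1)/(4k^2)+o(1)}$ uniformly in $\psi$, and accounting for coefficients, gives $|E(N)| \ll 2^r A \cdot N^{1-1/k}p^{(k+1)/(4k^2)+o(1)}$. Optimizing $k$ of order $\sqrt{\log p/(r\log 2)}$ yields $S(N)>0$ for $N \gg p^{1/4}e^{O(\sqrt{r\log p})}$, which recovers the exponential factor of Theorem~\ref{thm:main} with $c_r=0$. The novelty is the additional factor $p^{-c_r}$, which requires further saving.

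For that extra saving, the plan is to combine the preceding estimate with a Vinogradov-style smoothing. Let $q$ be the smallest simultaneous nonresidue; every prime $\ell<q$ then satisfies $\chi_i(\ell)=1$ for at least one index $i=i(\ell)$, which imposes nontrivial correlations on the character values $\chi_i(\ell)$ that are not captured by the crude Burgess bound. Writing $m=ab$ with $a$ a product of primes below a threshold and $b$ rough, applying Burgess to the inner $b$-sum, and exploiting the cover $\bigcup_i \cR_i \supseteq [1,q-1]$ (where $\cR_i$ is the $p_i$-th power residue subgroup) in the outer $a$-sum, one should recover an improvement by a factor $p^{-c_r}$ with $c_r$ positive.

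The main obstacle is this last step. The per-character savings coming from the residue-cover constraint must be distributed across the $2^r$-sized collection of characters appearing in the expansion of $E(N)$, and preserving a positive net gain requires a delicate balance. The $e^{-r}$ lower bound on $c_r$ reflects the worst-case outcome of this balance: roughly $e^r$ characters behave generically and force the per-character saving to be divided, while the cover $\{\cR_i\}$ of $[1,q-1]$ has at most $r$ degrees of freedom. Making this quantitative through a careful inner double-sum estimate over primes $\ell<q$ and the associated indices $i(\ell)$ is where the bulk of the technical work will be concentrated.
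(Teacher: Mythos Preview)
In the paper this corollary is immediate from Theorem~\ref{thm:main}: for fixed $r$ the factor $e^{C(\log r)^{1/2}(\log p)^{1/2}}$ is $p^{o(1)}$. What you are really attempting is a self-contained proof of Theorem~\ref{thm:main}, and that is where the comparison should be made.

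Your first stage --- the character expansion plus Burgess, giving $S(N)>0$ for $N=p^{1/4+o(1)}$ --- is essentially the paper's Lemma~\ref{lem:simul} and is fine (the paper uses a sieve rather than the full expansion to control the small $p_i$, but this is cosmetic). The second stage has a genuine gap. Your Vinogradov-style plan rests on the observation that every prime $\ell<q$ is a $p_{i(\ell)}$-th power residue for \emph{some} index $i(\ell)$. For $r=1$ this is the classical trick: all primes below $q$ are residues, hence all $q$-smooth integers are residues, and smooth-number density forces $q$ down. For $r\ge 2$ the implication does not multiply: if $\ell_1$ is a $p_1$-residue and $\ell_2$ is a $p_2$-residue, then $\ell_1\ell_2$ need not be a residue for any $p_i$, so $q$-smooth integers carry no usable structure relative to the simultaneous-nonresidue indicator. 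Your sketch concedes this (``the main obstacle is this last step'') but supplies no mechanism; the hoped-for ``per-character saving'' from the cover $\bigcup_i\cR_i\supseteq[1,q-1]$ is not visible, since the characters $\psi=\prod_{i\in J}\chi_i^{j_i}$ in the expansion of $\mathbf{1}_{NR}$ are products, and a single constraint $\chi_{i(\ell)}(\ell)=1$ says nothing about $\psi(\ell)$ unless $J=\{i(\ell)\}$.

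The paper's route is entirely different and avoids this obstruction. After locating (Lemma~\ref{lem:simul}) many simultaneous nonresidues $n\le H=p^{1/4+o(1)}$, it shows (Lemma~\ref{lem:separ}) that almost all $n\le H$ possess $t\approx r2^r$ divisors $d_1<\cdots<d_t$ with $d_{j+1}/d_j>H^{1/t^c}$ for any fixed $c>1/\log 2$. A pigeonhole argument in $\F_{p_1}\times\cdots\times\F_{p_r}$ (Corollary~\ref{cor:cor3}) then shows that among any $t>r2^r$ divisors of a simultaneous nonresidue $n$ there exist $d_i<d_j$ with $\ind_g(d_j/d_i)\not\equiv\ind_g n\pmod{p_k}$ for every $k$; hence $n'=nd_i/d_j$ is again a simultaneous nonresidue and $n'\le nH^{-1/t^c}\le p^{1/4-c_r+o(1)}$ with $c_r\asymp t^{-c}\asymp e^{-(1+o(1))r}$. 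The saving comes from divisor structure and a combinatorial reduction step, not from any refinement inside the character sum.
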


From our earlier discussion, the upper bound given in Theorem \ref{thm:main} holds also for
$g(p)$ whenever $p-1$ has $r$ distinct prime factors.

\begin{corollary}
\label{cor:main2} For any $\e>0$, if $p-1$ has at most $(0.5
-\e)\log\log p$ distinct prime divisors, then $g(p) =
o(p^{1/4})$ as $p\to\infty$.
\end{corollary}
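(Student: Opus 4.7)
The plan is to insert Theorem~\ref{thm:main} into the identification of $g(p)$ with a simultaneous power nonresidue and then carry out a short asymptotic computation. As observed in the paragraph preceding the corollary, if $p_1,\ldots,p_r$ are \emph{all} the distinct prime divisors of $p-1$, then $g(p)$ equals the smallest positive simultaneous $p_1,\ldots,p_r$-th power nonresidue modulo $p$; hence Theorem~\ref{thm:main} gives
$$
g(p) < p^{1/4}\cdot p^{-c_r}\exp\left(C(\log r)^{1/2}(\log p)^{1/2}\right).
$$
To obtain $g(p)=o(p^{1/4})$ under the hypothesis $r\le (0.5-\e)\log\log p$, it therefore suffices to prove
$$
c_r\log p - C(\log r)^{1/2}(\log p)^{1/2}\to +\infty\qquad(p\to\infty).
$$

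I would handle the first term by splitting into cases. If $r$ stays bounded along the sequence of primes under consideration, then $c_r$ is bounded below by a positive constant and $c_r\log p$ grows linearly in $\log p$, easily dominating the secondary term of order $(\log p)^{1/2+o(1)}$. In the remaining regime $r\to\infty$, the bound $c_r\ge e^{-(1+o(1))r}$ combined with $r\le(0.5-\e)\log\log p$ yields
$$
c_r\ge\exp\left(-(1+o(1))(0.5-\e)\log\log p\right)=(\log p)^{-(1/2-\e)(1+o(1))}.
$$
Since $\e$ is fixed and positive, one may choose the internal $o(1)$ small enough (possible for all $p$ sufficiently large, since $r\to\infty$) to conclude that $c_r\log p\ge(\log p)^{1/2+\e/2}$.

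For the secondary term, the hypothesis gives $(\log r)^{1/2}\le(\log\log\log p)^{1/2}=(\log p)^{o(1)}$, so $C(\log r)^{1/2}(\log p)^{1/2}=(\log p)^{1/2+o(1)}$. Since $1/2+\e/2>1/2$, the difference $c_r\log p-C(\log r)^{1/2}(\log p)^{1/2}$ tends to $+\infty$, proving $g(p)=o(p^{1/4})$. The one delicate point is confirming that the $o(1)$ sitting inside the exponent of $c_r\ge e^{-(1+o(1))r}$ can indeed be absorbed when $r$ grows only as slowly as $\log\log p$; however the fixed positive margin $\e$ in the hypothesis leaves precisely the room needed. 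No other obstacles arise, as the rest is bookkeeping with elementary estimates.
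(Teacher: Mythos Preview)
Your proposal is correct and follows exactly the route the paper intends: the paper states that the corollary ``directly follows'' from Theorem~\ref{thm:main}, and the remark immediately after the theorem (that $p^{-c_r}$ is dominated by the exponential factor precisely when $r\ge(0.5+\e)\log\log p$) is the contrapositive of the calculation you carry out. Your case split between bounded and unbounded $r$, together with the observation that the fixed margin $\e$ absorbs the $o(1)$ in $c_r\ge e^{-(1+o(1))r}$, is precisely the bookkeeping needed to make ``directly follows'' rigorous.
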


The counting function of primes satisfying the hypothesis of Corollary \ref{cor:main2} is
$x(\log x)^{-3/2+(\log 2)/2-O(\eps)}$ (the upper bound follows from e.g., \cite[Inequality (5)]{Erd35};
the lower bound can be obtained using sieve methods).

\begin{rem}\label{remark1}
 The focus of our arguments is to establish bounds which are uniform in $r$.
 We have made no attempt to optimize the value of $c_r$ for small $r$, and leave
 this as a problem for further study.
\end{rem}

Our proof of Theorem \ref{thm:main} proceeds in three main steps.  The first is a standard application
of character sums to show that a large proportion of integers $n<p^{1/4+o(1)}$  are simultaneous
$p_1,\ldots,p_r$-th power nonresidue modulo $p$.  Next, we show that if such a number $n$ has many divisors
($r2^r$ divisors suffice), then for some pair $d<d'$ of these divisors, the smaller number
$n'=dn/d'$ is also a   simultaneous  $p_1,\ldots,p_r$-th power nonresidue modulo $p$.  This procedure is
most efficient when the ratios $d'/d$ are uniformly large.  In the third step we show that
integers possessing many well-spaced divisors are sufficiently dense, so that there must be one such number
in the set guaranteed by first step (with an appropriate quantification of ``well-spaced'' and ``dense'').

\section{Character sums and distribution of power nonresidues}

We begin by recalling the well-known character sum estimate of Burgess
\cite{Bur2, Bur}.

\begin{lem}
\label{lem:Burg} If $p$ is a prime and $\chi$ is a non-principal
character modulo $p$ and if $H$ and $m$ are arbitrary positive
integers, then
$$
\Bigl|\sum_{n=N+1}^{N+H} \chi(n)\Bigr| \ll H^{1- 1/m} p^{(m+1)/4m^2}(\log p)^{1/m}
$$
for any integer $N$, where the implied constant is absolute.
\end{lem}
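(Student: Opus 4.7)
The plan is to reproduce Burgess's classical argument, which combines a translation trick, the multiplicativity of $\chi$, and Weil's bound for character sums of rational functions. Let $S=\sum_{n=N+1}^{N+H}\chi(n)$, and introduce parameters $A,B\ge 1$ (to be optimized) with $AB$ small compared to $H$. For any $a\in[1,A]$ and $b\in[1,B]$, shifting the summation range by $ab$ alters $S$ by only $O(AB)$ boundary terms, so averaging over $(a,b)$ yields
\[
AB\cdot S=\sum_{a=1}^{A}\sum_{b=1}^{B}\sum_{n=N+1}^{N+H}\chi(n+ab)+O(A^{2}B^{2}).
\]
Multiplicativity in the form $\chi(n+ab)=\chi(b)\chi(a+nb^{-1}\bmod p)$ recasts the right-hand side as $\sum_{b}\chi(b)\sum_{n}T(nb^{-1})$, where $T(c):=\sum_{a=1}^{A}\chi(c+a)$.

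Next I would group the inner sum by the residue class $c\equiv nb^{-1}\pmod p$. Writing $\nu(c)$ for the number of such representations, a standard divisor-type count gives $\sum_c\nu(c)=HB$ and $\sum_c\nu(c)^2\ll HB\log p$, provided $HB<p$ (which is guaranteed by the eventual choice of $B$). H\"older's inequality with exponent $2m$, combined with log-convex interpolation between these two moments, yields
\[
|AB\cdot S|^{2m}\ll H^{2m-1}B^{2m}(\log p)\sum_{c\bmod p}|T(c)|^{2m}+(A^{2}B^{2})^{2m},
\]
reducing the problem to estimating the $2m$-th moment of $T$.

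The key input is then Weil's theorem. Expanding and swapping summations,
\[
\sum_c|T(c)|^{2m}=\sum_{a_1,\ldots,a_m,\,b_1,\ldots,b_m\in[1,A]}\sum_{c\bmod p}\chi\!\Bigl(\prod_{i=1}^{m}(c+a_i)\prod_{j=1}^{m}(c+b_j)^{d-1}\Bigr),
\]
where $d$ is the order of $\chi$. By Weil, the inner character sum is $O(m\sqrt{p})$ unless the argument is a perfect $d$-th power in $\F_p[c]$, which forces the multisets $\{a_i\}$ and $\{b_j\}$ to coincide. The resulting diagonal contribution is $\ll m!\,A^m p$, while the off-diagonal contributes $\ll m\,A^{2m}\sqrt{p}$, so $\sum_c|T(c)|^{2m}\ll m!\,A^m p+m\,A^{2m}\sqrt{p}$. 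Substituting this estimate, taking the $2m$-th root, and choosing $A$ and $B$ to balance the two main terms against the shift error $(AB)^{2m}$ produces the inequality stated in the lemma; the $(\log p)^{1/m}$ factor arises directly from the divisor bound on $\sum_c\nu(c)^2$. The main technical obstacle is the moment calculation: accounting for coincidences among the $a_i$'s to confirm that the diagonal contributes no more than $m!\,A^m p$, and invoking Weil with a degree bound depending only on $m$, not on the order of $\chi$.
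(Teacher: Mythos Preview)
The paper does not give its own proof of this lemma at all: it simply quotes the result as (12.58) of Iwaniec--Kowalski and moves on. Your sketch is a faithful outline of precisely that classical Burgess argument (shift-averaging, H\"older against the multiplicity function $\nu(c)$, and the Weil-based estimate for $\sum_c |T(c)|^{2m}$), so you are in effect reproducing what the paper cites rather than diverging from it. The only points to tighten if you flesh it out are the exact H\"older/interpolation step (your displayed bound has $H^{2m-1}B^{2m}$ where the standard computation gives $(HB)^{2m-1}$, though this is harmless after optimization) and the ``diagonal'' count in the Weil step, where being a perfect $d$-th power does not literally force $\{a_i\}=\{b_j\}$ but only that each linear factor occurs with multiplicity divisible by $d$; you already flag this as the main technical point.
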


See the proof in~\cite{IK}, (12.58). In the remark after the proof
the authors announce that the factor $(\log p)^{1/m}$ can be
replaced by $(\log p)^{1/(2m)}$, but this is not important for us.

\begin{lem}
\label{lem:simul} Let $p$ be a prime number and $p_1,\ldots,p_r$ be
distinct prime divisors of $p-1$. The number $J$ of integers
$n\le H$ which are simultaneous $p_1,\ldots,p_r$-th power
nonresidues modulo $p$ satisfies
$$
J\ge 0.12H\prod_{i=1}^r\Bigl(1-\frac{1}{p_i}\Bigr) + O\Bigl(r^{13} H^{1- 1/m} p^{(m+1)/4m^2}(\log p)^{1/m}\Bigr),
$$
where the constant implied in the $``O"$-symbol is
absolute.
\end{lem}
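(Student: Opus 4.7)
The plan is to express $J$ as a sum of character sums via Fourier analysis on $(\mathbb{Z}/p\mathbb{Z})^{\ast}$ and then estimate each summand by Lemma~\ref{lem:Burg}. For each $i$, fix a multiplicative character $\chi_i\pmod{p}$ of exact order $p_i$. For $n$ coprime to $p$, the indicator that $n$ is a $p_i$-th power nonresidue is
$$
I_i(n)=\frac{p_i-1}{p_i}-\frac{1}{p_i}\sum_{k=1}^{p_i-1}\chi_i^{k}(n),
$$
so the indicator of simultaneous nonresidue is $I(n)=\prod_{i=1}^{r}I_i(n)$. Summing over $n\le H$ coprime to $p$ and multiplying out yields
$$
J=H\prod_{i=1}^{r}\Bigl(1-\tfrac{1}{p_i}\Bigr)+\sum_{\emptyset\ne S\subseteq\{1,\dots,r\}}\sum_{(k_i)_{i\in S}} c_{S,k}\sum_{n\le H}\prod_{i\in S}\chi_i^{k_i}(n)+O\!\left(\tfrac{H}{p}\right),
$$
with coefficients $c_{S,k}=\prod_{i\notin S}(1-1/p_i)\prod_{i\in S}(-1/p_i)$. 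Because the $p_i$ are distinct primes and each $k_i\ne 0$, every product $\prod_{i\in S}\chi_i^{k_i}$ is non-principal modulo $p$, so Lemma~\ref{lem:Burg} bounds every inner sum uniformly by $H^{1-1/m}p^{(m+1)/4m^2}(\log p)^{1/m}$.

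The critical obstacle is reducing the combinatorial overhead from the naive $2^r$ (coming from $\sum_{S,k}|c_{S,k}|=2^r\prod(1-1/p_i)$) down to the advertised polynomial $r^{13}$. To achieve this I would not invoke the exact identity above, but instead replace $I(n)$ by a pointwise lower bound supported on only $r^{O(1)}$ non-principal characters. Two natural candidates: (i) a Selberg--Beurling non-negative minorant of $\mathbf{1}_{\mathcal N}$ on $(\mathbb{Z}/p\mathbb{Z})^{\ast}$ whose Fourier support has polynomial size, or (ii) truncating the expansion to subsets $|S|\le s$ with $s=O(\log r)$, handling the discarded tail via Cauchy--Schwarz together with the Parseval identity $\sum_{\psi}|\widehat{\mathbf{1}_{\mathcal N}}(\psi)|^2=(p-1)\prod(1-1/p_i)$, which keeps the aggregate $L^2$ mass of the coefficients small.

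Either construction costs at most a bounded factor in the main term, producing the explicit constant $0.12$, and leaves only $r^{O(1)}$ surviving non-principal character sums. Applying Lemma~\ref{lem:Burg} uniformly to each of them and collecting yields an error of the claimed size $r^{13} H^{1-1/m}p^{(m+1)/4m^2}(\log p)^{1/m}$, while the contribution from $n$ with $p\mid n$ is absorbed in the $O(H/p)$ correction. The hardest step is constructing the explicit minorant (or tail bound) with quantitative control that simultaneously preserves a positive proportion of the main term and restricts the Fourier support to polynomial size; everything else is routine bookkeeping.
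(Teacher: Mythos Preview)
Your character-sum expansion and the observation that the naive coefficient mass is $2^r\prod(1-1/p_i)$ are correct, but the proposal stops precisely at the point that matters: you do not actually construct the minorant or carry out the tail estimate, and neither of your two candidates gives the claimed polynomial saving as written. For candidate~(ii), truncating at $|S|\le s$ with $s=O(\log r)$ leaves $\sum_{t\le s}\binom{r}{t}\asymp r^{O(\log r)}$ surviving subsets, which is superpolynomial; and bounding the discarded tail by Cauchy--Schwarz together with the large-sieve/Parseval identity $\sum_{\chi}\bigl|\sum_{n\le H}\chi(n)\bigr|^2\approx (p-1)H$ introduces a factor $\sqrt{pH}$, which for $H\approx p^{1/4}$ is far larger than the Burgess error and swamps the main term. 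For candidate~(i), there is no off-the-shelf Beurling--Selberg minorant on $\prod_i \mathbb{Z}/p_i\mathbb{Z}$ with polynomial-in-$r$ Fourier support that retains a fixed positive proportion of the mass; the natural tensor construction reproduces exactly the full $2^r$ expansion you are trying to avoid.

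The paper bypasses this difficulty by a device due to Erd\H{o}s: split the primes $p_i$ at a threshold $Cr^2$. For the \emph{large} $p_j>Cr^2$ one does not sieve at all but simply subtracts the count of $p_j$-th power residues, each of which is $H/p_j+O(R)$; since there are at most $r$ such primes and each $p_j>Cr^2$, the total loss in the main term is $<H/(Cr)$, which is a small fraction of $H\prod(1-1/p_i)$. For the \emph{small} $p_i\le Cr^2$ one applies Brun's sieve (dimension~$1$) with sifting level $z=Cr^2$; the sieve weights are supported on $z^{O(1)}=r^{O(1)}$ divisors, and each divisor contributes an error $O(R)$ via Burgess. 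Combining the two pieces gives $J\ge 0.12\,H\prod(1-1/p_i)-O(r^{13}R)$. The splitting by size, not a Fourier-analytic minorant, is what converts the $2^r$ into $r^{O(1)}$.
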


\begin{proof}
 We follow the method of~\cite{E}.  Let $C$ be a sufficiently large constant, to be chosen later.
Assuming that
$p_1<\dots<p_r$, we choose the largest $s\le
r$ so that $p_s\le Cr^2$ (if $p_1>Cr^2$, then set $s=0$). Let $J_1$ be the number of
integers $n\le H$ which are simultaneous $p_1,\ldots,p_s$-th power
nonresidues modulo $p$. For $j>s$, let $J_{2,j}$ be the number of
integers $n\le H$ which are $p_j$-th power residues modulo $p$.
Clearly,
\begin{equation}
\label{Jlow}
J\ge J_1 - \sum_{j=s+1}^r J_{2,j}.
\end{equation}

Let $g$ be a primitive root of $p$ and let
$\chi_0$ be the principal Dirichlet character modulo $p$.  We will denote by $\chi$
a generic Dirichlet character modulo $p$.
By orthogonality, for $(x,p)=1$ we have
$$
\frac{1}{d}\sum_{\chi^d=\chi_0}\chi(x) = \begin{cases} 1, & \text{ if }\, \ind_gx\equiv 0\pmod d,\\
0, & \text{ if } \,\ind_gx\not\equiv 0\pmod d.\end{cases}
$$
A number $n$ is a $p_i$-power residue
modulo $p$ if and only if $p_i| \ind_g n$.
Hence,
$$
J_1=\sum_{\substack{n\le H\\\gcd(\ind_g n, p_1\ldots p_s)=1}}1
=\sum_{d|p_1\ldots p_s}\mu(d)\sum_{\substack{n\le H\\ d|\ind_g n}} 1
$$
and for $j=s+1,\dots,r$ we have
\begin{equation}
\label{eqn:J2j=}
J_{2,j}= \sum_{\substack{n\le H \\ p_j | \ind_g n}} 1.
\end{equation}

We denote
$$R=H^{1- 1/m} p^{(m+1)/4m^2}(\log p)^{1/m}.$$
Using Lemma~\ref{lem:Burg} for
$\chi\not=\chi_0$, we get for any $d$ that
\begin{equation}\label{sumchid}
\sum_{\substack{n\le H \\ d|\ind_g n}} 1 =
  \frac 1d\sum_{\chi^d=\chi_0}\sum_{n\le H}\chi(n) = \frac{H}{d} + O(R).
\end{equation}

To estimate $J_1$ we use a lower bound sieve as in~\cite{E}
combining with \eqref{sumchid}.  Brun's sieve \cite[Theorem 2.1 and the following Remark 2]{HR}
suffices.  Here the ``sieve dimension'' is $\kappa=1$.  Taking $\lambda=\frac14$, $b=1$, $z=Cr^2$
and $L=O(R)$ in \cite[Theorem 2.1 and the following Remark 2]{HR}, we get that
\begin{align*}
 J_1 &\ge H \prod_{i=1}^s \(1-\frac{1}{p_i}\) \(1 - 2 \frac{\lambda^{2b} e^{2\lambda}}{1-\lambda^2 e^{2+2\lambda}}
+O\pfrac{1}{\log z} \) - O(z^{4.1} R)\\
&\ge  0.13 H \prod_{i=1}^s\(1-\frac{1}{p_i}\) - O(r^{13} R)
 \end{align*}
if $C$ is large enough.

By \eqref{eqn:J2j=} and \eqref{sumchid},
$$\sum_{j=s+1}^r J_{2,j} = H\sum_{j=s+1}^r\frac{1}{p_j} +O(rR) \le \frac{H}{Cr}  +O(rR),$$
since $p_j>Cr^2$ for all $j\ge s+1$.
Invoking \eqref{rprod} and assuming that $C\ge 100$, we get
$$J_1 - \sum_{j=s+1}^r J_{2,j} \ge 0.12 H\prod_{i=1}^s\Bigl(1-\frac{1}{p_i}\Bigr)
+ O(r^{13}R).$$
Using (\ref{Jlow}) we complete the proof of the lemma.
\end{proof}

\section{Reduction of simultaneous nonresidues}

The aim of this section is to show that if a positive integer $n$
which is a simultaneous $p_1,\ldots,p_r$-th power nonresidue modulo
$p$ has many divisors then it is possible to construct $n'<n$ which
is also a simultaneous $p_1,\ldots,p_r$-th power nonresidue modulo
$p$.

\begin{lem}
\label{lem:NotEqual} Let $a$ be a non-zero real number, $\ell\in \mathbb{N}$ and
\begin{equation}
\label{eqn:seq1}
a_1,a_2,\ldots,a_{2\ell-1}
\end{equation}
be any sequence of $2\ell-1$ real numbers (not necessarily distinct).
Then for some indices $i_1<i_2<\ldots <i_\ell$ we have that
$a_{i_{s}}-a_{i_{t}}\not = a$ for all $1\le s,t\le \ell$.
\end{lem}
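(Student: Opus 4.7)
The plan is to recast the conclusion as an independent-set problem on an auxiliary graph and exploit bipartiteness. Form $G$ on the vertex set $\{1,\ldots,2\ell-1\}$ by placing an edge between $i$ and $j$ whenever $a_i - a_j = \pm a$. A subset $\{i_1,\ldots,i_\ell\}$ satisfies the conclusion of the lemma precisely when it is an independent set in $G$: the diagonal case $s=t$ gives $0\ne a$ automatically, while for distinct $s,t$ the two forbidden equalities $a_{i_s}-a_{i_t}=a$ and $a_{i_t}-a_{i_s}=a$ correspond to the two orientations of a single edge of $G$. It therefore suffices to produce an independent set of size $\ell$ in $G$.

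The main step is to show that $G$ is bipartite. Once this is granted, any proper 2-coloring of a graph on $2\ell-1$ vertices yields a color class of size at least $\lceil (2\ell-1)/2\rceil = \ell$, and every color class of a bipartite graph is an independent set; this delivers the required $\ell$ indices.

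To exhibit a proper 2-coloring of $G$, introduce the equivalence relation on indices defined by $i \sim j \iff (a_i - a_j)/a \in \Z$. Within each equivalence class, fix a base index $i_0$ and color $j$ by the parity of the integer $(a_j - a_{i_0})/a$; indices in different classes may be colored arbitrarily. An edge $\{i,j\}$ of $G$ satisfies $(a_i - a_j)/a = \pm 1$, so its endpoints belong to the same class and their associated integers differ by $\pm 1$, hence have opposite parities. Thus the coloring is proper and $G$ is bipartite.

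I do not anticipate any real obstacle: the argument is purely combinatorial and reduces the lemma to the standard observation that bipartite graphs admit large independent sets. The one mild point is organising the parity coloring class by class, but this is essentially forced once one recognises that an edge of $G$ can only join two indices whose $a$-rescaled difference is $\pm 1$.
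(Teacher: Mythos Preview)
Your proposal is correct and is essentially the paper's own argument recast in graph-theoretic language: the paper uses the same equivalence relation $i\sim j \iff (a_i-a_j)/a\in\Z$, splits each class by the parity of $(a_j-a_{i_0})/a$, and takes the larger half of each class to obtain $\ge\ell$ indices with no difference equal to $a$. Your bipartite-graph formulation packages the same parity coloring and the same pigeonhole step (a 2-coloring of $2\ell-1$ vertices has a class of size $\ge\ell$) without any substantive change in the underlying idea.
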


\begin{proof}
We may assume that $a>0$.  Define an equivalence relation on the numbers $i$ by
setting $i \sim j$ if $a_i-a_j=ka$ for some integer $k$.  Let $S_1,\ldots,S_m$ be the
different (nonempty) equivalence classes.  Clearly $a_i-a_j=a$ is only possible for $i,j$ within
a given equivalence class.  Let $b_r$ be the smallest element of $S_r$, for each $r=1,\ldots,m$.
Divide each $S_r$ into two subclasses,
\begin{align*}
 S_r^{(0)} &= \{ i \in S_r : a_i-a_{b_r} = ka \text{ for some even integer } k\}, \\
 S_r^{(1)} &= \{ i \in S_r : a_i-a_{b_r} = ka \text{ for some odd integer } k\}.
\end{align*}
Obviously $a_i-a_j=a$ is impossible within each subclass $S_r^{(0)}, S_r^{(1)}$.
For $1\le r\le m$, define $\eps_r=0$ if $|S_r^{(0)}| \ge |S_r^{(1)}|$, and $\eps_r=1$ otherwise, and
let $B=\bigcup_{r=1}^m S_r^{(\eps_r)}$.  Then $|B| \ge \ell$, and $a_i-a_j\ne a$ for $i,j\in B$.
Any set $\{i_1,\ldots,i_\ell\} \subset B$ then satisfies the requirements of the lemma.
\end{proof}

\begin{rem}\label{lem1_remark}
 The conclusion of Lemma \ref{lem:NotEqual} is best possible, as may be seen by taking $a_i=ai$ for
 $1\le i\le 2\ell-1$; in any set of $\ell+1$ elements $a_i$ there are two with difference $a$.
\end{rem}

\begin{lem}
\label{lem:NotCongr} Let $q$ be a prime, $u\in\R,\, u>1$ and
$a\in\Z$, $a\not\equiv 0\pmod q$. Assume that
\begin{equation}
\label{eqn:seq2}
a_1,a_2,\ldots,a_{t}
\end{equation}
is a sequence of $t\ge 2uq/(q-1)$ integers (not necessarily
distinct). Then for some $\ell\in\N,\,\ell \ge u$ and indices
$i_1<i_2<\ldots <i_\ell$ we have that
$$
a_{i_{v}}-a_{i_{w}}\not \equiv a \pmod q \quad (1\le v,w\le \ell).
$$
\end{lem}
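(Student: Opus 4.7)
The plan is to recast the problem as finding a large independent set in the cycle graph $C_q$ on $\Z/q\Z$ (with edges $\{s,s+1\}$), and then run an averaging argument over all $q$ cyclic shifts of a maximum independent set. Since $q$ is prime and $a\not\equiv 0\pmod q$, the residue $a$ is invertible modulo $q$, so for each $i\in\{1,\ldots,t\}$ there is a unique $k_i\in\{0,1,\ldots,q-1\}$ with $a_i\equiv a_1+k_ia\pmod q$. The forbidden relation $a_v-a_w\equiv a\pmod q$ translates exactly into $k_v-k_w\equiv 1\pmod q$. Hence it suffices to find a set $B\subseteq\{1,\ldots,t\}$ of cardinality at least $u$ whose image $\{k_i:i\in B\}\subseteq\Z/q\Z$ contains no pair $\{s,s+1\}$, i.e.\ is an independent set in $C_q$.

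To produce such a $B$, fix a maximum independent set $T_0\subseteq\Z/q\Z$, which has size $\lfloor q/2\rfloor\ge(q-1)/2$, and for each $j\in\Z/q\Z$ set $T_j=T_0+j\pmod q$, again a maximum independent set. By translation invariance each $r\in\Z/q\Z$ lies in exactly $|T_0|$ of the sets $T_j$, so
$$
\sum_{j=0}^{q-1}\bigl|\{i:k_i\in T_j\}\bigr|=\sum_{i=1}^{t}\bigl|\{j:k_i\in T_j\}\bigr|=t\lfloor q/2\rfloor\ge\frac{t(q-1)}{2}.
$$
Averaging over $j$ shows that some $j^*$ satisfies $\bigl|\{i:k_i\in T_{j^*}\}\bigr|\ge t(q-1)/(2q)\ge u$, the last step being the hypothesis $t\ge 2uq/(q-1)$. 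Set $B=\{i:k_i\in T_{j^*}\}$ and list its elements as $i_1<\ldots<i_\ell$; then $\ell=|B|$ is an integer at least $u$, and for any $v,w$ the values $k_{i_v},k_{i_w}$ both lie in the independent set $T_{j^*}$, so $k_{i_v}-k_{i_w}\not\equiv 1\pmod q$, giving $a_{i_v}-a_{i_w}\not\equiv a\pmod q$.

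No substantive obstacle arises; one should merely note that the direct $2$-coloring trick used in the proof of Lemma~\ref{lem:NotEqual} does \emph{not} extend to odd $q$ since the cycle $C_q$ is not bipartite, which is precisely why an averaging over all $q$ cyclic shifts of a single maximum independent set is required rather than a single partition into two halves.
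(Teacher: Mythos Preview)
Your proof is correct and self-contained, but it differs from the paper's route. The paper first applies the pigeonhole principle to find a residue class $h\pmod q$ hit by at most $t/q$ of the $a_i$; deleting those indices leaves at least $t(q-1)/q\ge 2\lceil u\rceil-1$ terms whose reductions lie in $\{1,\dots,q-1\}$ (after the shift $c_s\equiv a_s-h$), and then Lemma~\ref{lem:NotEqual} is applied to the integers $c_s$ as real numbers. The point is that removing the single class $h$ breaks the odd cycle $C_q$ into a path, which \emph{is} bipartite, so the $2$-colouring of Lemma~\ref{lem:NotEqual} does apply; your closing remark that the $2$-colouring ``does not extend'' overlooks this reduction.

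Your averaging over the $q$ rotations of a maximum independent set in $C_q$ is a genuinely different device: it bypasses Lemma~\ref{lem:NotEqual} entirely and yields the same threshold $t(q-1)/(2q)\ge u$ in one stroke. The trade-off is that the paper's argument reuses an already-proved lemma and is marginally sharper in the rounding (it produces exactly $\ell=\lceil u\rceil$ indices from $2\ell-1$ survivors), while yours is more conceptual and would generalise immediately to any vertex-transitive ``forbidden difference'' graph on $\Z/q\Z$ with known independence number.
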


\begin{proof} We can assume that $a=1$. Define $\ell=\lceil u\rceil$.
From the pigeon-hole principle, there is a residue class $h\pmod q$
containing at most $t/q$ elements from the sequence~\eqref{eqn:seq2}. Since
$$
\Big\lceil t - \frac{t}{q}\Big\rceil = \Big\lceil t(q-1)/q\Big\rceil
\ge \lceil 2u \rceil\ge 2\ell-1,
$$
after rearranging \eqref{eqn:seq2} we may assume that
$$
a_{s}\not\equiv h\pmod q \quad (s=1,2,\ldots, 2\ell-1).
$$
Define $c_{s}\in \{1,2,\ldots, q-1\}$ by
$$
c_{s}\equiv a_{s}-h\pmod q.
$$
By Lemma~\ref{lem:NotEqual}, there is a subsequence
$c_{i_1},\ldots,c_{i_{\ell}}$ such that
$$
c_{i_v}-c_{i_w}\not =1 \quad (1\le v,w\le \ell).
$$
Since $1\le c_i\le q-1$ this implies that
$$
c_{i_v}-c_{i_w}\not \equiv 1\pmod q \quad (1\le v,w\le \ell)
$$
and thus
\[
a_{i_v}-a_{i_w}\not \equiv 1\pmod q \quad (1\le v,w\le \ell). \qedhere
\]
\end{proof}

\begin{rem} \label{rem1}
For $q=2$ it is enough to require $t\ge 2u$.
Indeed, we can choose a large subsequence of
$a_1,a_2,\ldots,a_{t}$ of the same parity.
\end{rem}

\begin{cor}
\label{cor:cor2} Let $p_1,p_2,\ldots,p_r$ be prime numbers, and
$$
\mathbf{b}=(b_1,b_2,\ldots,b_r)\in \F_{p_1}^{*}\times\F_{p_2}^{*}\times \ldots \times \F_{p_r}^{*}.
$$
Let
$$
t> 2^r\prod_{i:p_i>2}\frac{p_i}{p_i-1}
$$
and
$$
\aa_1,\, \aa_2\,\ldots, \aa_t
$$
be a sequence of $t$ elements from $\F_{p_1}\times\F_{p_2}\times
\ldots \times \F_{p_r}$. Then for some $i<j$ we have that
$$
\aa_{j}-\aa_{i}\in \left(\F_{p_1}\setminus \{b_1\}\right)\times
\left(\F_{p_2}\setminus \{b_2\}\right)\times
\ldots \times \left(\F_{p_r}\setminus \{b_r\}\right).
$$
\end{cor}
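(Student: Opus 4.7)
The plan is to prove Corollary \ref{cor:cor2} by induction on $r$, peeling off one coordinate at a time using Lemma \ref{lem:NotCongr} (or Remark \ref{rem1} when the relevant prime is $2$).

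For the base case $r = 1$, the hypothesis reads $t > 2 p_1/(p_1-1)$ if $p_1$ is odd, or $t > 2$ if $p_1 = 2$. Choosing $u = t(p_1-1)/(2 p_1) > 1$ in Lemma \ref{lem:NotCongr} (respectively $u = t/2 > 1$ in Remark \ref{rem1}) produces a subsequence of length $\lceil u \rceil \geq 2$ whose pairwise differences are not congruent to $b_1 \pmod{p_1}$, and its first two indices supply the required pair $i < j$.

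For the inductive step, suppose the result holds for $r - 1$ primes, and set $\Pi = \prod_{k < r,\, p_k > 2} p_k/(p_k - 1)$. Apply Lemma \ref{lem:NotCongr} (or Remark \ref{rem1} if $p_r = 2$) to the sequence of $r$-th coordinates of $\aa_1, \ldots, \aa_t$, with $a = b_r$ and $u = t(p_r-1)/(2 p_r)$ (respectively $u = t/2$). A short calculation, in which the factor $(p_r-1)/(2 p_r)$ exactly cancels the $k = r$ factor from $\prod_{p_k > 2} p_k/(p_k-1)$ in the hypothesis (and analogously in the $p_r = 2$ case), gives $u > 2^{r-1} \Pi$. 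Hence the lemma returns indices $i_1 < \ldots < i_\ell$ with $\ell > 2^{r-1} \Pi$ such that $\aa_{i_w} - \aa_{i_v}$ has $r$-th coordinate not congruent to $b_r \pmod{p_r}$ for every $v < w$. Project this subsequence onto its first $r-1$ coordinates and invoke the inductive hypothesis with the truncated vector $(b_1, \ldots, b_{r-1})$ to obtain a pair $v < w$ whose difference avoids $b_k$ in coordinate $k$ for $k = 1, \ldots, r-1$. Combined with the $r$-th coordinate property already built into the subsequence, the pair $(i, j) = (i_v, i_w)$ satisfies all required conditions.

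The main care required is to keep $u > 1$ at every step so that $\lceil u \rceil \geq 2$; the strict inequality in the hypothesis $t > 2^r \prod_{p_k > 2} p_k/(p_k - 1)$ supplies exactly this margin, and working with a real-valued $u$ (rather than an integer target size) at each application avoids rounding losses. Beyond this bookkeeping, the argument is essentially a clean $r$-fold application of Lemma \ref{lem:NotCongr}.
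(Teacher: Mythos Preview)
Your proof is correct and follows essentially the same approach as the paper, which simply states that the corollary ``follows from $r$ applications of Lemma~\ref{lem:NotCongr} and taking into account Remark~\ref{rem1}.'' Your induction on $r$ is precisely the natural way to formalize these $r$ applications, and your bookkeeping with the real parameter $u$ correctly propagates the strict inequality needed at each step.
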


Corollary~\ref{cor:cor2} follows from $r$ applications of
Lemma~\ref{lem:NotCongr} and taking into account Remark~\ref{rem1}.

\begin{cor}
\label{cor:cor3} Let $p$ be a prime number and suppose $p_1,\ldots, p_r$ are
distinct prime divisors of $p-1$. Let $n$ be a simultaneous
$p_1,\ldots,p_r$-th power nonresidue modulo $p$ and $d_1<\dots<d_t$
be some divisors of $n$ where
$$
t> 2^r\prod_{p_i>2}\frac{p_i}{p_i-1}.
$$
Then there exists $i,j$ such that $1\le i<j\le t$ and the number
$n'=nd_i/d_j$ is also a simultaneous $p_1,\ldots,p_r$-th power
nonresidue modulo $p$.
\end{cor}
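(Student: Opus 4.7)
The plan is to reduce Corollary \ref{cor:cor3} to an immediate application of Corollary \ref{cor:cor2}, using the discrete logarithm (index) to translate the residue condition into a coordinatewise non-equality in $\F_{p_1}\times\cdots\times\F_{p_r}$.

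First, fix a primitive root $g$ modulo $p$ and write $\ind = \ind_g$. The condition that a number $m$ with $\gcd(m,p)=1$ is a $p_\ell$-th power nonresidue modulo $p$ is equivalent to $\ind(m)\not\equiv 0 \pmod{p_\ell}$. Since $n$ is a simultaneous $p_1,\ldots,p_r$-th power nonresidue, the reductions
$$
b_\ell := \ind(n) \bmod p_\ell \quad (\ell=1,\ldots,r)
$$
all lie in $\F_{p_\ell}^{*}$, so $\mathbf{b}=(b_1,\ldots,b_r)\in \F_{p_1}^{*}\times\cdots\times\F_{p_r}^{*}$, exactly as required in the hypothesis of Corollary \ref{cor:cor2}.

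Next, to each divisor $d_i$ attach the vector
$$
\aa_i = \bigl(\ind(d_i) \bmod p_1,\,\ldots,\,\ind(d_i) \bmod p_r\bigr) \in \F_{p_1}\times\cdots\times\F_{p_r}.
$$
For any pair $i<j$ the candidate number $n' = n d_i/d_j$ is a positive integer smaller than $n$ (since $d_j\mid n$ and $d_i<d_j$), and its index satisfies $\ind(n')\equiv \ind(n)+\ind(d_i)-\ind(d_j)\pmod{p-1}$. Hence $n'$ is a simultaneous $p_1,\ldots,p_r$-th power nonresidue modulo $p$ if and only if
$$
\ind(d_j)-\ind(d_i) \not\equiv b_\ell \pmod{p_\ell} \quad\text{for all } \ell=1,\ldots,r,
$$
which is exactly the condition $\aa_j-\aa_i \in (\F_{p_1}\setminus\{b_1\})\times\cdots\times(\F_{p_r}\setminus\{b_r\})$.

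Finally, the hypothesis $t > 2^r\prod_{p_i>2} p_i/(p_i-1)$ is precisely the bound assumed in Corollary \ref{cor:cor2}, so that corollary applied to the sequence $\aa_1,\ldots,\aa_t$ and the target vector $\mathbf{b}$ produces indices $i<j$ with the required property, yielding the desired $n'=nd_i/d_j$. There is no real obstacle here: all the combinatorial content is packaged into Corollary \ref{cor:cor2}, and the only role of this proof is the index-based translation between the multiplicative residue condition modulo $p$ and the additive coordinatewise condition in $\F_{p_1}\times\cdots\times\F_{p_r}$.
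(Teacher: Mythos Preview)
Your proof is correct and follows essentially the same approach as the paper: fix a primitive root, associate to $n$ and to each $d_i$ the vector of indices reduced modulo $p_1,\ldots,p_r$, and invoke Corollary~\ref{cor:cor2} to find $i<j$ with $\aa_j-\aa_i$ avoiding $\mathbf{b}$ in every coordinate. If anything, your write-up is slightly more explicit than the paper's in spelling out the index arithmetic showing that $n'=nd_i/d_j$ is again a simultaneous nonresidue.
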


\begin{proof} Let $g$ be a primitive root modulo $p$. To each number $x$ we
associate the vector
$$
(u_1,u_2,\ldots,u_r)\in \F_{p_1}\times\F_{p_2}\times \ldots \times \F_{p_r},
$$
so that for $1\le i\le r$, $x\equiv g^{p_i k_i+s_i} \pmod{p}$ where $0\le s_i<p_i$

Let the vector $(b_1,b_2,\ldots, b_r)$ correspond to $n$ and the
vectors $\aa_1, \aa_2, \ldots, \aa_t$
correspond to $d_1,\dots,d_t$, respectively.  Apply Corollary~\ref{cor:cor2} and select the
indices $i<j$ such that
$$
\aa_j-\aa_i\in \left(\F_{p_1}\setminus \{b_1\}\right)\times\left(\F_{p_2}\setminus \{b_2\}\right)
\times \ldots \times \left(\F_{p_r}\setminus \{b_r\}\right)
$$
Then $n'=nd_i/d_j$ is a simultaneous $p_1,p_2,\ldots, p_r$-power  nonresidue
modulo $p$.
\end{proof}

\begin{rem}\label{rem:cor3}
We note that if $p_1,p_2,\ldots, p_r$ are distinct primes, then
\be\label{rprod}
r> \prod_{p_i>2}\frac{p_i}{p_i-1}.
\ee
Hence, in Corollaries \ref{cor:cor2} and \ref{cor:cor3} one can take
$t=2^r r$.
\end{rem}

\section{Integers with well-spaced divisors}

Let $P^{-}(n)$ and $P^+(n)$ denote the smallest and
largest prime factor of $n$, respectively, let $\omega(n)$ be the number
of distinct prime factors of $n$, and let $\tau(n)$ be the number of positive
divisors of $n$.

\bigskip

\begin{lem} \label{lem:separ}
For each fixed constant $c>1/\log2=1.442\ldots$, there is
$\eta=\eta(c)>0$ such that the following holds.
Uniformly for integers $t$, $2\le t\le (\log x)^{1/c}$, all but
$O_{c}(x/t^\eta)$ integers $n\le x$ have $t$ divisors $d_1<d_2<\cdots<d_t$
such that $d_{j+1}/d_j > x^{1/t^{c}}$ for all $1\le j\le t-1$.
\end{lem}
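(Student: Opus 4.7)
The plan is to bound the exceptional set $\cE(x,t) \subset \{n \le x\}$ of integers failing to have $t$ divisors with the required geometric spacing, targeting $|\cE(x,t)| = O_c(x/t^\eta)$ for some $\eta = \eta(c) > 0$. The condition $c > 1/\log 2$ reflects the fact that typical $n \le x$ has $\tau(n) \asymp (\log x)^{\log 2}$ divisors, so the hypothesis $t \le (\log x)^{1/c} < (\log x)^{\log 2}$ ensures that the number of divisors sought is strictly below the typical total, leaving a margin quantified by $c - 1/\log 2 > 0$.

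First, I would discard atypical $n$ via standard prime-factor statistics. By the Hardy--Ramanujan inequality and Tur\'an's second-moment estimate, all but $O(x/(\log x)^A)$ integers $n \le x$ satisfy $|\omega(n) - \log\log x| \le (\log\log x)^{2/3}$, and similar bounds control the sizes of the smallest and largest prime factors and analogous anomalies. Since $t \le (\log x)^{1/c}$ we have $(\log x)^{-A} \le t^{-Ac}$, so by taking $A$ large in terms of $c$ these exceptional sets are absorbed into the target bound $O_c(x/t^\eta)$.

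For the remaining ``typical'' $n$, I would construct the required divisors from the prime factorization. Setting $L = (\log x)/t^c$, partition $[0, \log x]$ into roughly $t^c$ blocks of width $L$. The goal is to show that typical $n$ has a divisor whose logarithm falls in many of these blocks, from which $t$ can be selected so that the chosen blocks are pairwise non-adjacent, thereby forcing $d_{j+1}/d_j > e^L = x^{1/t^c}$. The existence of a divisor in a given log-block is governed by Ford's structure theorem on the count $H(x, y, z)$ of integers with a divisor in $(y, z]$; combined with averaging over blocks and a second-moment (variance) argument on the number of occupied blocks, this should bound $|\cE(x,t)|$.

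The main obstacle, in my view, is upgrading a weak ``$o(x)$'' exceptional bound (which would already follow from a naive divisor-distribution argument) into the polynomial saving $x/t^\eta$. This should require either a Chebyshev-type concentration estimate applied to the random variable counting divisor-containing blocks, or a direct sieve analysis yielding a power saving in $t$, with $\eta(c)$ emerging from the surplus $c - 1/\log 2 > 0$.
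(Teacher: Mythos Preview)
Your outline has two concrete gaps. First, the quantitative claim in your opening step is false: Hardy--Ramanujan/Tur\'an with a deviation of size $(\log\log x)^{2/3}$ yields an exceptional set of size $O(x/(\log\log x)^{1/3})$, not $O(x/(\log x)^A)$. To get a power-of-$\log x$ saving (hence a $t^{-\eta}$ saving) from prime-factor statistics you must allow a deviation of order $\log\log x$, i.e.\ work with $\omega(n) \ge (1-\delta)\log\log x$ for fixed $\delta>0$, and then large-deviation bounds of Hal\'asz type are what you need, not the second moment. Second, and more seriously, your core mechanism is not specified: invoking $H(x,y,z)$ controls the probability that $n$ has a divisor in \emph{one} prescribed interval, but gives no handle on the joint distribution across $t^c$ blocks, and ``Chebyshev-type concentration on the number of occupied blocks'' presupposes a variance bound you have not indicated how to prove. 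You correctly flag the upgrade to $x/t^\eta$ as the crux, but then offer no method.

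The paper's argument supplies exactly the missing device. One writes $n=abd$ with $b$ squarefree and supported on primes in $(x^{1/(t^\alpha\log t)},x^{1/\log t}]$ for $\alpha$ slightly above $1/\log 2$; Hal\'asz's theorem shows that outside a set of size $O(x/t^{\delta})$ one has $\omega(b)\ge k_0$ with $2^{k_0}\ge 2t$, so $\tau(b)\ge 2t$. The key step is then purely combinatorial: if $b$ has \emph{no} $t$ divisors spaced by $x^{1/t^c}$, a Cauchy--Schwarz pigeonhole forces the ``close-pair'' count
\[
W^*(b;\sigma)=\#\{(d',d''): d'\mid b,\ d''\mid b,\ d'\ne d'',\ |\log(d'/d'')|\le \sigma\},\qquad \sigma=t^{-c}\log x,
\]
to satisfy $W^*(b;\sigma)\ge \tau(b)^2/(2t)$. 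Summing $W^*(b;\sigma)/(b\,\tau(b)^2)$ over all admissible $b$ reduces to counting pairs of subsets $Y,Z\subset\{1,\dots,k\}$ with $\big|\sum_{i\in Y}\log p_i-\sum_{i\in Z}\log p_i\big|\le\sigma$; fixing the largest index in the symmetric difference and summing over the corresponding prime produces the factor $t^{\alpha-c}$ that ultimately yields the saving $t^{-\eta}$ with $\eta$ depending on $c-1/\log 2$. This $W^*$-plus-Cauchy--Schwarz idea is the substantive content you are missing.
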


\begin{proof}
We may assume that $t\ge 10$.  Take $$\e = \frac{c-1/\log 2}{4}, \qquad
\alpha=1/\log 2+\e.$$
Write each $n\le x$ in the form $abd$ where
$P^-(d)>x^{1/\log t}$, $P^+(a)\le x^{1/(t^\a\log t)}$ and all prime
factors of $b$ lie in $(x^{1/(t^\a\log t)},x^{1/\log t}]$.
We divide $n$ into several categories.  Let $k_0= \lceil \frac{\log
  2t}{\log 2} \rceil$.  Let $S_0$ be the set of
$n\le x$ with either $d=1$ or with $b$ not squarefree.  Let $S_1$ be the
set of $n$ with $d>1$, $b$ squarefree and $\omega(b) < k_0$.
We denote $\a_j=j\e$ for $1\le j\le J-1:=[\a/\e]$, $\a_J=\a$,
$a_j=x^{1/(t^{\a_j}\log t)}$ for $j=1,\dots,J$.
Let $S_2$ be the set of $n$ with $d>1$, $b$ squarefree and
the number of primes from the interval $(a_j,x^{1/\log t}]$ dividing $n$
is less than $k_j:=(\a_j-\e)\log t$ for some $j=1,\dots,J-1$.
Let $S_3$ be the set of the remaining integers $n$.

We first show that $S_0$, $S_1$, and $S_2$ are small.  By standard counts for
smooth numbers,
\[
|S_0| \le \Psi(x,x^{1/\log t})+\sum_{p>x^{1/(t^\a\log t)}}
\frac{x}{p^2} \ll \frac{x}{t}+\frac{x}{x^{1/(t^\a\log t)}} \ll \frac{x}{t}.
\]
Next, by the results of Hal\'asz~\cite{Hal} on the number of integers with a prescribed number of prime
factors from a given set (see also Theorem 08 of \cite{HT}), we have
\begin{align*}
|S_1| &\ll \sum_{k < k_0} x e^{-E} \frac{E^k}{k!}, \qquad
E=\sum_{x^{1/(t^\a\log t)}<p\le x^{1/\log t}} \frac{1}{p}=\a\log t + O(1) \\
&\ll x t^{-\a} \sum_{k < k_0} \frac{(\a \log t)^k}{k!} \\
&\ll x (t^\a)^{-(\beta\log\beta-\beta+1)}, \quad \beta=\frac{1}{\a \log 2}=\frac1{1+\e \log 2}<1
\\
&\ll x / t^{\delta}
\end{align*}
for some $\delta>0$ which depends on $\e$.

For any $j=1,\dots,J-1$ we denote by $S_{2,j}$ the set of $n\le x$ with
less than $k_j$ prime divisors from $(a_j,x^{1/\log t}]$. We have
$$
|S_{2,j}|\ll \sum_{k < k_j} x e^{-E_j} \frac{E_j^k}{k!},
$$
where
$$E_j=\sum_{x^{1/(t^{\a_j}\log t)}<p\le x^{1/\log t}} \frac{1}{p} = \alpha_j\log t + O(1).$$
Arguing as before we get
$$|S_{2,j}|\ll x / t^{\delta'}$$
for some $\delta'>0$ which depends on $\e$.

Notice that for $n\in S_3$, $\tau(b) =2^{\omega(b)} \ge 2^{k_0} \ge 2t$.
Next, let $S_4$ be the set of $n\in S_3$ for which $b$ does \emph{not} have
$t$ well-spaced divisors in the sense of the lemma.  Since $d>1$ for
such $n$, given such a \emph{bad} value of $b$, using a standard sieve
bound the number of choices
for the pair $(a,d)$ is bounded above by
\[
\sum_a |\{d\le x/ab:P^-(d)>x^{1/\log t}\}| \ll \sum_a
\frac{x/ab}{\log(x^{1/\log t})} \ll \frac{x}{bt^\a}.
\]
Hence,
\begin{equation}\label{S4est}
|S_4|\ll  \sum_{\text{bad }b}\frac{x}{bt^\a}
\end{equation}

A number $b$ which is bad has many pairs of \emph{neighbor
  divisors}.  To be
precise, let $\sigma=t^{-c}\log x$ and define
\[
W^*(b;\sigma) = |\{(d',d''):d'|b,d''|b,d'\ne d'',|\log (d'/d'')| \le \sigma \}|.
\]
Let $d_1<\dots<d_{\tau(b)}$ be the divisors of $b$. We construct
the subsequence $D_1<\dots<D_r$ of this sequence:
$$D_1=1,\quad D_i=\min\{d_j:\,d_j> x^{t^{-c}}D_{i-1}\}\,(i>1).$$
The process is terminated if $D_i$ does not exist. Let
$D_{r+1}=+\infty$. The set $\{d_1,\dots,d_{\tau(b)}\}$ is divided
into $r$ subsets $\mathcal D_i$, $i=1,\dots,r$, where
$$\mathcal D_i=\{d_j:\,D_i\le d_j<D_{i+1}\}.$$
We see that $(d',d'')$ is counted in $W^*(b;\sigma)$ if $d',d''\in\mathcal D_i$
for some $i$ and $d'\neq d''$. Hence,
$$W^*(b;\sigma)\ge\sum_{i=1}^r|\mathcal D_i|(|\mathcal D_i|-1)
=\sum_{i=1}^r |\mathcal D_i|^2 - \tau(b).$$
Since $\tau(b)\ge 2t$ and $r\le t$, we get by the Cauchy-Schwartz inequality that
\[
\tau(b)^2 = \bigg(\sum_{i=1}^r |\mathcal D_i| \bigg)^2 \le t \bigg( \sum_{i=1}^r |\mathcal D_i|^2 \bigg) \le t (W^*(b;\sigma)+\tau(b))
\le tW^*(b;\sigma)+\frac12 \tau(b)^2.
\]
Therefore,
\begin{equation}\label{Sum1over_b}
\sum_{\text{bad }b} \frac{1}{b} \le \sum_{\text{all }b}
\frac{2W^*(b;\sigma)t}{b\tau(b)^2},
\end{equation}
each sum being over squarefree integers whose prime factors lie in
$(x^{1/(t^\a\log t)},x^{1/\log t}]$.

In the latter sum, fix $k=\omega(b)$, write $b=p_1\cdots p_k$, where
the $p_i$ are primes, and $p_1<\cdots<p_k$.
Then $W^*(p_1\cdots p_k;\sigma)$ counts the number of pairs
$Y,Z \subset \{1,\ldots,k\}$ with $Y\ne Z$ and
\begin{equation}\label{YZpi}
\Big| \sum_{i\in Y} \log p_i - \sum_{i\in Z} \log p_i \Big| \le \sigma.
\end{equation}
Fix $Y, Z$, and let $I$ be the maximum element of the symmetric difference
$(Y\cup Z)-(Y\cap Z)$.  We fix $I$ and count the number of $p_1,\ldots,p_k$ satisfying \eqref{YZpi}.
We further partition the solutions, according to the condition
$a_j<p_I\le a_{j-1}$, for $j=1,\ldots,J$.  Fix the value of $j$.
If all the $p_i$ are fixed except for $p_I$, then
\eqref{YZpi} implies that $p_I$ lies in some interval of the form
$[U,Ue^{2\sigma}]$.  As $p_I>x^{1/t^{\a_j}\log t}$ as well, and $\a>c$,
we have (putting $U_j=\max(U,x^{1/t^{\a_j}\log t})$)
\[
\sum_{p_I} \frac{1}{p_I} \ll \log\(1+\frac{2\sigma}{\log U_j}\) \ll
\frac{\sigma}{\log U_j} \ll t^{\a_j-c}\log t.
\]
Hence, for each fixed $k$, $j$, $Y$ and $Z$,
\be\label{sump1pk}
\begin{split}
 \ssum{x^{1/t^{\a}\log t} < p_1<\ldots<p_k\le x^{1/\log t}}
   \frac{1}{p_1\cdots p_k}
  &\ll \frac{t^{\a_j-c} (\log t)}{(k-1)!}
\Big(\sum_{x^{1/t^{\a}\log t} < p\le x^{1/\log t}} \frac{1}{p}
\Big)^{k-1} \\ &\ll \frac{t^{\a_j-c}(\log t)(\a\log t+ O(1))^{k-1}}{(k-1)!}.
\end{split}
\ee
Now we estimate the number $N(I,j)$ of choices for the pair $Y,Z$ for fixed $I$
and $j$.  Since $p_I \le a_{j-1}$, the condition $n\in S_3$ implies
$I\le k-k_{j-1}$.
For any $i\le I$ there
are at most four possibilities: $i\in Y\cap Z$, $i\in Y\setminus Z$,
$i\in Z\setminus Y$, $i\not\in Y\cup Z$. For $i>I$
there are two possibilities: $i\in Y\cap Z$ and
$i\not\in Y\cup Z$. Therefore,
\be\label{NIj}
N(I,j)\le 4^{I}2^{k-I} \le 4^k 2^{-k_{j-1}} \le 4^k t^{-\a_j\log 2+2\e\log 2}.
\ee
It follows from \eqref{sump1pk} and \eqref{NIj} that
\[
\sum_{\omega(b)=k} \frac{W^*(b;\sigma)t}{b\tau(b)^2}
\ll \sum_{j=1}^J t^{1+(1-\log2)\a_j+2\e-c}
\sum_k \frac{(\a\log t+ O(1))^{k-1}}{(k-1)!}.
\]

Taking into account that $\a_j\le\a$ and summing on $j,k$ we get
\[
\sum_{b} \frac{W^*(b;\sigma)t}{b\tau(b)^2} \ll t^{1+2\e+(2-\log 2)\a-c}.
\]
Thus, by (\ref{S4est}) and (\ref{Sum1over_b}),
\[
|S_4| \ll \frac{x}{t^{c-(1-\log 2)\a-2\e-1}} = \frac{x}{t^{c-1/\log 2 - \e(3-\log 2)}} \ll \frac{x}{t^\e}.
\]
 Therefore, there are
$x-O(x/t^{\min(\delta,\delta',\e)})$ numbers $n\le x$
for which $b$ does have $t$ well-spaced divisors.
\end{proof}

\begin{rem}\label{rem_div}
 Lemma \ref{lem:separ} is best possible in the sense that the conclusion does not hold for
$c<1/\log 2$.
 In fact, for any $c<1/\log 2$,
 the number of integers $n\le x$ that \emph{do} have $t$ divisors $d_1,\ldots,d_t$ with
 $d_{j+1}/d_j < n^{1/t^c}$ for all $j$ is $O_c(x/t^\eta)$ for some $\eta>0$ which depends on $c$.

\begin{proof}
 It is well-known
 that if $t$ is large, $c<1/\log 2$ and $\eps$ small enough, then a typical integer
 $n$ has $r\sim (c+\e)\log t$ prime factors $p_1,\ldots,p_r$ in $[n^{1/t^{c+\e}},n]$.
 This can be seen, e.g. by the theorem of Hal\'asz used in the estimation of $|S_1|$.
 In fact, the  number of exceptional  $n\le x$ is $O_c(x/t^\eta)$.
 Thus,  a typical $n$ has
 about $2^{(c+\e)\log t}=t^{(c+\e)\log 2}<t$ divisors composed of such primes.
Also, for most of these $n$, $n/(p_1 \cdots p_r) < n^{1/(2t^c)}$; by Theorem 07 of \cite{HT}, the
number of exceptions $n\le x$ is $O(x \exp \{-c_1 t^{\eps} \})$ for some $c_1>0$.
Suppose that such an $n$ has $t$ well-spaced divisors $d_1,\ldots,d_t$ with
 $d_{j+1}/d_j < n^{1/t^c}$ for all $j$.  By the pigeon-hole principle, two of these divisors share
 the same set of prime factors from $\{p_1,\ldots,p_r\}$, hence their ratio is less than $n^{1/(2t^c)}$, a
 contradiction.
 \end{proof}
 \end{rem}

\section{Proof of Theorem~\ref{thm:main}}

We rewrite the assertion of Lemma~\ref{lem:simul} as
\begin{equation}
\label{est2J}
J\ge 0.12H\prod_{i=1}^r\Bigl(1-\frac{1}{p_i}\Bigr) - R',\quad
R'=(5r)^{C''} H^{1- 1/m} p^{(m+1)/4m^2}(\log p)^{1/m}
\end{equation}
for some constant $C''$. Let $\mathcal{N}$ denotes the set of $n\in[1,H]$ which are simultaneous
$p_1,\ldots,p_r$-th power nonresidue modulo $p$, where
$$
H=p^{1/4}e^{(C''+3)(\log p)^{1/2} (\log(5r))^{1/2}}\log p.
$$
Assume that $p$ is sufficiently large, and take
$$m=\lfloor (\log p)^{1/2} (\log(5r))^{-1/2}\rfloor.$$
Notice that $m\gg(\log p)^{1/2}(\log\log p)^{-1/2}\to\infty$ as
$p\to\infty$.  Since
$$R' = H (Hp^{-1/4}/\log p)^{-1/m} p^{1/(4m^2)}(5r)^{C''},$$
we have
$$(Hp^{-1/4}/\log p)^{-1/m}\le(5r)^{-C''-3}$$
and
$$p^{1/(4m^2)}\le 5r.$$
Consequently,
$$R'\le H(5r)^{-2}.$$
By \eqref{est2J} and \eqref{rprod},
$$J\ge (0.12r^{-1} - (5r)^{-2})H \ge 0.08H/r.$$
So, we see that
\begin{equation}
\label{estmathcalN}
|\mathcal{N}|\ge 0.08H/r.
\end{equation}

We consider the case
\begin{equation}
\label{firstcase}
r<0.6\log\log p
\end{equation}
first.  We will apply Lemma~\ref{lem:separ} with $x=H$,
fixed $c\in(1/\log2,1.5]$, and with $t=K r 2^r$, where
$K$ is a sufficiently large constant depending on $c$.
By \eqref{estmathcalN}, the exceptional set in
Lemma \ref{lem:separ} is smaller than $|\mathcal{N}|$ provided that  $K$ is large enough.
The condition $2\le t\le (\log x)^{1/c}$
is satisfied due to the restriction on $r$ and $c$.
By Lemma \ref{lem:separ}, for some $n\in\mathcal{N}$, there are well-separated
divisors $d_1<\dots<d_t$ of $n$, satisfying $d_{i+1}/d_i>n^{1/t^c}$ for each $i$.
Now we are in position to apply
Corollary~\ref{cor:cor3} and we see that there is an $n'\le np^{-t^{-c}/4}$ such
that $n'$ is a simultaneous $p_1,\ldots,p_r$-th power nonresidue
modulo $p$.  Noting that
$t^{-c}/4=\exp\{-r(c\log 2+o(1))\}$ and that $c$ may be taken arbitrarily close to $1/\log 2$,
we complete the proof.

If~(\ref{firstcase}) does not hold, then, as
we have mentioned in Section~1, the factor $p^{-c_r}$ in the
statement of the theorem is dominated by the second factor, and the claim follows
from the fact that $\mathcal{N}\neq\emptyset$.

\section{Acknowledgements}
The first author is supported in part by National Science Foundation grants
DMS-1201442 and DMS-1501982.
The third author is supported by grant RFBR 14-01-00332
and grant Leading Scientific Schools N 3082.2014.1.

Address of the authors:\\

K. Ford, Department of Mathematics, 1409 West Green Street,
University of Illinois at Urbana-Champaign, Urbana, IL 61801, USA.

E-mail address: {\tt ford@math.uiuc.edu}

\vspace{1cm}

M.~Z.~Garaev, Centro de Ciencias Matem\'{a}ticas, Universidad
Nacional Aut\'onoma de M\'{e}xico, C.P. 58089, Morelia,
Michoac\'{a}n, M\'{e}xico.

Email address: {\tt garaev@matmor.unam.mx}

\vspace{1cm}

S. V. Konyagin, Steklov Mathematical Institute, 8 Gubkin Street,
Moscow, 119991, Russia.

Email address: {\tt konyagin@mi.ras.ru}

\end{document}